\begin{document}
\newcommand{\capi}[2]{\framebox[2.5cm]{\textgt{\normalsize #1}}}
\newcommand{\bvec}[1]{\mbox{\boldmath $#1$}}
\newcommand{\subd}{\mathrm{Sd}}
\newcommand{\val}{\mathrm{val}}
\newcommand{\VAL}{\mathrm{Val}}

\theoremstyle{definition}

\newtheorem{thm}{Theorem}[section]
\newtheorem{cor}[thm]{Corollary}
\newtheorem{lem}[thm]{Lemma}
\newtheorem{prop}[thm]{Proposition}
\newtheorem{defi}[thm]{Definition}
\theoremstyle{remark}
\newtheorem{rem}[thm]{Remark}
\newtheorem{exa}[thm]{Example}

\title[A Note on tropical curves and the Newton diagrams of PCS]{A Note on tropical curves and the Newton diagrams of plane curve singularities}
\author{Takuhiro TAKAHASHI}
\address{Mathematical Insititute, Tohoku University, Aoba, Sendai, Miyagi, 980-8578, Japan}
\email{sb3m17@math.tohoku.ac.jp}
\maketitle
\begin{abstract}

For a convenient and Newton non-degenerate singularity, 
the Milnor number 
is computed from the complement of its Newton diagram 
in the first quadrant, 
so-called Kouchnirenko's formula. 
In this paper, we consider tropical curves dual to subdivisions 
of this complement 
for a plane curve singularity and show the existence of a tropical curve 
satisfying a certain formula, which looks like a well-known formula for a 
real morsification due to A'Campo and Gusein-Zade.
\end{abstract}

\section{Introduction}

Tropical geometry is 
rapidly developing 
as a new study area in mathematics. 
In \cite{Mik_enumerative}, 
Mikhalkin counts nodal curves on toric surfaces, 
which is an epoch-making result in algebraic geometry. 
Though 
there are several studies concerning tropical curves 
corresponding to 
singular algebraic curves, 
see for instance 
\cite{Shustin, MMS, Ganor}, 
the study of relation 
between theory of tropical curves and 
the singularity theory is still underdeveloping 
in tropical geometry.

In singularity theory, 
the Newton diagram is an important tool to get 
information of singularities. 
Let 
$f$ 
be a polynomial of 
two variables over $\mathbb{C}$, 
and suppose that 
$f(0,0)=0$ 
and 
$f$ 
has an isolated singularity 
at $0=(0,0) \in \mathbb{C}^2$. 
Set 
$f(x,y)=\sum_{(i,j)}a_{ij}x^i y^j$, 
and 
$\Delta_f=\mathrm{Conv}(\{ (i,j) \in \mathbb{R}^2 ; a_{ij} \neq 0 \})$, 
where $\mathrm{Conv}(\cdot)$ is the convex hull. 
The convex hull $\Delta_f$ is called the \textit{Newton polytope} of $f$. 
Let 
$\Gamma_{-}(f)$ 
be the polyhedron defined by 
\[
\mathrm{Closure}\Bigl(
(\mathbb{R}_{\ge 0})^2 \setminus 
\mathrm{Conv}(\{ (i,j)+(\mathbb{R}_{\ge 0})^2 ; a_{ij} \neq 0 \})
\Bigr), 
\]
where 
$\mathrm{Closure}(\cdot)$ 
is the closure with usual topology of 
$\mathbb{R}^2$. 
The \textit{Newton boundary} 
$\Gamma(f)$ 
of $f$ 
is the union of compact faces of 
$\mathrm{Conv}(\{ (i,j)+(\mathbb{R}_{\ge 0})^2 ; a_{ij} \neq 0 \})$. 
The singularity 
$(f,0)$ 
is \textit{convenient} if 
$\Gamma_{-}(f)$ 
is compact. 
The singularity $(f,0)$ is \textit{Newton non-degenerate} 
if, for any face $\sigma$ in $\Gamma(f)$,  
the function 
$f^{\sigma}(x,y):=\sum_{(i,j)\in \sigma \cap \Delta_f}a_{ij}x^i y^j$
has no singularity in $(\mathbb{C}^*)^2$. 

Note that, 
for a convenient and Newton non-degenerate singularity, 
the Milnor number of 
$(f,0)$ 
is computed from $\Gamma_-(f)$, which is 
the celebrated theorem of Kouchnirenko \cite{Kouch}.
In this paper, 
we regard 
$\Gamma_{-}(f)$ 
as a part of polyhedrons 
obtained 
as 
a dual subdivision of a tropical curve 
and give a meaning of 
$\Gamma_{-}(f)$ 
from the viewpoint of tropical geometry. 

To state our result, 
we here introduce some terminologies in tropical geometry. 
Let $F$ be a polynomial of two variables over 
the field $K$ of convergent Puiseux series over $\mathbb{C}$. 
The \textit{tropical curve} 
$T_F$ 
is defined by the 
image of 
$F=0$ 
by the valuation map, 
which is a 1-simplicial complex in 
$\mathbb{R}^2$. 
The valuation of the coefficients of $F$ 
induces a subdivision 
$\subd(F)$ 
of the Newton polytope 
$\Delta_{F}$ 
and it is known that 
$\subd(F)$ 
is dual to $T_F$, 
which is so-called 
the \textit{Duality Theorem} (see \S 2). 

Now we consider a union of polygons 
corresponding to a part of polygons of $\subd(F)$.
Let $\Delta'$ be a sub-polyhedron of 
$\Delta_F$. 
A subset $S$ of $T_F$ is called 
the \textit{tropical sub-curve with respect to $\Delta'$} 
if $\Delta'$ is a union of sub-polyhedrons of 
$\subd(F)$ and 
$S$ is dual to the subdivision of 
$\Delta'$ 
induced by $\subd(F)$. 
We denote it by $T_F|_{\Delta'}$. 
Note that if $\Delta'=\Delta_F$ 
then $T_F|_{\Delta'}=T_F$. 
See Definition \ref{resttrop} for the precise definition of 
$T_F|_{\Delta'}$. 

For plane curve singularities, 
the real morsification due to 
A'Campo \cite{AC} and Gusein-Zade \cite{GZ} 
gives an explicit way to understand mutual positions 
of vanishing cycles. 
Our hope is that we can perform 
the same observation 
for tropical curves realized in 
$\Gamma_{-}(f)$. 
The main theorem in this paper 
asserts that 
we can see the vanishing cycles of $f=0$ on 
the tropical curve in 
$\Gamma_{-}(f)$. 
For a tropical sub-curve $T_F|_{\Delta'}$, 
let 
$v(T_F|_{\Delta'})$ 
denote the number of 4-valent vertices 
of $T_F|_{\Delta'}$ and 
$r(T_F|_{\Delta'})$ 
denote the number of regions bounded by 
$T_F|_{\Delta'}\subset \mathbb{R}^2$.

\begin{thm} \label{MainThm}\it
For any Newton non-degenerate and convenient isolated singularity 
$(f,0)$, 
there is a polynomial 
$F:=F_f \in K[z,w]$ 
such that 
$\Delta_F=\mathrm{Conv}(\Gamma_{-}(f))$ 
and 
$T_F|_{\Gamma_{-}(f)}$ 
satisfies 
\[
\mu(f)=v(T_F|_{\Gamma_{-}(f)})+r(T_F|_{\Gamma_{-}(f)}).
\]
\end{thm}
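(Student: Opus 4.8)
The plan is to reduce the statement to a purely combinatorial count on a lattice polygon and then read off the two numbers $v$ and $r$ from the Duality Theorem. First I would put Kouchnirenko's formula into lattice-point form. Writing $S$ for the area of $\Gamma_{-}(f)$ and $a,b$ for its intercepts on the two coordinate axes, Kouchnirenko's theorem gives $\mu(f)=2S-a-b+1$. Applying Pick's formula $S=I'+B'/2-1$ to the (in general non-convex) lattice polygon $\Gamma_{-}(f)$, where $I'$ counts the interior lattice points and $B'$ the boundary lattice points, and accounting for the boundary points as the $a+b+1$ points on the two axes together with the $M$ lattice points in the relative interior of the Newton boundary $\Gamma(f)$, I obtain the clean identity $\mu(f)=2I'+M$. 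This identifies the two pieces I must realize tropically.

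Next I would construct the subdivision of $\Delta_F=\mathrm{Conv}(\Gamma_{-}(f))$ that produces these two summands separately. Inside $\Gamma_{-}(f)$ I place every unit square contained in $\Gamma_{-}(f)$. Since $\Gamma_{-}(f)$ is the region under the graph of the non-increasing convex function whose graph is $\Gamma(f)$, a unit square lies in $\Gamma_{-}(f)$ exactly when its top-right corner does; hence these squares are in bijection with the lattice points $(i,j)\in\Gamma_{-}(f)$ having $i,j\ge 1$, of which there are precisely $I'+M$, and they are pairwise disjoint. The complement of their union inside $\Gamma_{-}(f)$ is a lattice polygon, a staircase strip along $\Gamma(f)$, which I triangulate into primitive triangles using all remaining lattice points as vertices; I triangulate $\mathrm{Conv}(\Gamma_{-}(f))\setminus\Gamma_{-}(f)$ similarly, arranging that $\Gamma(f)$ is a union of edges so that $\Gamma_{-}(f)$ is a union of cells. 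To realize this as $\subd(F)$ I choose a convex piecewise-linear height function $h$ on $\mathrm{Conv}(\Gamma_{-}(f))$ that is affine on each square and each triangle and that creases along $\Gamma(f)$, and set $F=\sum c_{ij}\,t^{h(i,j)}z^{i}w^{j}$ with generic $c_{ij}\in\mathbb{C}^{*}$; by the Duality Theorem $\subd(F)$ is then exactly this subdivision and $\Delta_F=\mathrm{Conv}(\Gamma_{-}(f))$.

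With $F$ fixed I would read off the two counts from duality applied to the sub-curve $T_F|_{\Gamma_{-}(f)}$. A $k$-valent vertex of a tropical curve is dual to a $k$-gon cell, so the $4$-valent vertices of $T_F|_{\Gamma_{-}(f)}$ are exactly the quadrilateral cells inside $\Gamma_{-}(f)$, namely the $I'+M$ unit squares, giving $v(T_F|_{\Gamma_{-}(f)})=I'+M$. The bounded regions of $T_F|_{\Gamma_{-}(f)}$ are dual to those vertices of $\subd(F)$ all of whose incident cells lie in $\Gamma_{-}(f)$, that is, to the lattice points in the interior of $\Gamma_{-}(f)$; the lattice points on $\Gamma(f)$ are not counted, since each has an incident cell in the dent $\mathrm{Conv}(\Gamma_{-}(f))\setminus\Gamma_{-}(f)$, whence $r(T_F|_{\Gamma_{-}(f)})=I'$. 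Adding these yields $v+r=(I'+M)+I'=2I'+M=\mu(f)$, as required. It is precisely this distinction between interior vertices of $\Gamma_{-}(f)$ and of its convex hull that separates the contribution of $r$ from that of $v$ and makes the formula balance.

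The main obstacle I anticipate is the realizability step: exhibiting a single convex height function that simultaneously makes every prescribed unit square a flat cell, keeps $\Gamma(f)$ as a locus of edges, and renders the whole subdivision regular, so that it genuinely arises as $\subd(F)$ for some $F\in K[z,w]$. This is where the convexity of the Newton boundary, equivalently the description of $\Gamma_{-}(f)$ as the region under a convex graph, must be used with care. The remaining steps are routine: the lattice-point reformulation of $\mu$ via Pick's formula, the bijection between unit squares and the points with $i,j\ge 1$, and the primitive triangulation of the residual strip, whose triangle count $a+b-1-M\ge 0$ follows from the monotonicity of $\Gamma(f)$.
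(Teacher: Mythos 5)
Your proposal has the same overall architecture as the paper's proof: realize, by a patchworking polynomial, a dual subdivision of $\Gamma_{-}(f)$ consisting of unit lattice squares together with lattice triangles of area $1/2$; identify $v(T_F|_{\Gamma_{-}(f)})$ with the number of squares and $r(T_F|_{\Gamma_{-}(f)})$ with the number of interior lattice points of $\Gamma_{-}(f)$ via Theorem~\ref{DualThm}; and finish with Kouchnirenko's formula. Your counting, however, is executed differently and more cleanly. Writing $I'$ for the number of interior lattice points of $\Gamma_{-}(f)$ and $M$ for the number of lattice points in the relative interior of $\Gamma(f)$, your Pick-theorem identity $\mu(f)=2I'+M$, together with the bijection between unit squares and lattice points of $\Gamma_{-}(f)$ with both coordinates positive, replaces the paper's decomposition $\Gamma_{-}(f)=\Xi_1\cup\Xi_2$, its Lemma~\ref{lem1} (the count $(p_i-1)(q_i-1)/2$ of squares in each $\Delta_{(p_i,q_i)}$), and the closing algebra; in the paper's notation $M=n-1$, and your identifications $v=I'+M$, $r=I'$ are exactly its equalities (1) and (2).

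The genuine gap is the step you flag yourself: realizability. You prescribe the cells first (every unit square contained in $\Gamma_{-}(f)$, plus primitive triangulations of the staircase strip and of $\mathrm{Conv}(\Gamma_{-}(f))\setminus\Gamma_{-}(f)$) and then assert the existence of a convex piecewise linear $h$ affine exactly on these cells. That existence is the nontrivial content: a lattice subdivision of a polygon need not be regular, i.e.\ need not be induced by any convex height function, and your sketch offers no construction of one --- without it there is no $F$, and nothing to which the Duality Theorem can be applied. The paper avoids the issue by running the construction in the opposite direction: it defines the height function first, namely the separable function $\nu_f(i,j)=a_0+\cdots+a_i+b_0+\cdots+b_j$ with $\{a_k\},\{b_k\}$ strictly increasing, which is affine on every unit lattice square and strictly convex across grid lines, then modifies its values at the lattice points of $\Gamma(f)$; the special subdivision is \emph{by definition} the one induced by this function, so regularity is automatic and only the shape of the induced cells needs to be checked. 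Adopting that construction, or proving regularity of your prescribed subdivision by exhibiting such an $h$ (for instance a separable function plus a perturbation chosen affine on the squares), is what is needed to close the gap.

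A secondary inaccuracy: your criterion that bounded regions of the sub-curve complement correspond to vertices all of whose incident cells lie in $\Gamma_{-}(f)$ fails when $\Gamma(f)$ has a single face. Take $f=x^2+y^4$: the dent is empty, the point $(1,2)$ satisfies your criterion, yet it lies on $\partial\Delta_F$, so its dual region is unbounded; counting it would give $v+r=4\neq 3=\mu(f)$. The correct statement, which is what the paper uses and what your final formula $r=I'$ amounts to, is that bounded regions correspond to lattice points in the topological interior of $\Gamma_{-}(f)$, the unboundedness at points of $\Gamma(f)$ coming either from an incident dent cell or from lying on $\partial\Delta_F$.
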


This result is an analogy of the following equality required for a real morsification: 
\[ \mu(f) = \delta (f_s) + r(f_s),  \]
where 
$f_s$ is a real morsification of $f$, 
$\delta(f_s)$ is the number of double points of $f_s=0$ 
in a previously fixed small neighborhood 
$U$ 
of the origin, 
and 
$r(f_s)$ is the number of bounded regions of 
$\{ f_s = 0\}|_{\mathbb{R}^2} \cap U$. 
As a corollary of Theorem~\ref{MainThm}
we have the equality $\delta(f)=v(T_F|_{\Gamma_{-}(f)})$, 
where 
$\delta(f)$ 
is the number of double points of 
$(f,0)$, 
see Corollary~\ref{Cor}. 

Remark that 
the polynomial 
$F$ 
in Theorem \ref{MainThm} is given by a patchworking polynomial 
asssociated with a subdivision of $\Delta_F$. 
A similar observation appears in a paper of Shustin \cite{Shu2}, 
where a real polynomial whose critical points 
has a given index distribution is constructed. 

We organize the paper as follows. 
In section 2, 
we introduce 
tropical curves and subdivisions of Newton polytopes 
induced from the valuation map 
and state the Duality Theorem. 
In section 3, 
we give the definition of tropical sub-curves 
and 
prove Theorem~\ref{MainThm}. 
Two examples will be given before the proof. 

\subsection*{Acknowledgement}
I am grateful 
to Masaharu Ishikawa 
and Takeo Nishinou for fruitful discussions. 
I would like to thank Eugenii Shustin 
for precious comments and telling me about his previous result. 
I also thank Nikita Kalinin for giving me interesting comments.

\section{Preliminaries}

First, 
we give some definitions about polytopes. 
A \textit{polygon} 
in $\mathbb{R}^2$ 
is the intersection 
of a finite number of half-spaces 
in $\mathbb{R}^2$ 
whose vertices are 
contained in the lattice 
$\mathbb{Z}^2 \subset \mathbb{R}^2$. 
A polygon is called a \textit{polytope} if 
it is compact. 
In this paper, 
a \textit{polyhedron} means 
a union of polytopes 
which is 
connected and compact. 
Thus a polyhedron is not 
convex generally. 
A subset in a polyhedron 
is a \textit{sub-polyhedron} 
if it is a polyhedron 
as a subset of $\mathbb{R}^2$. 
In particular, 
if a sub-polyhedron is a polytope 
then the sub-polyhedron is called a \textit{sub-polytope}.

Let 
$K:=\mathbb{C}\{\{ t \}\}$ 
be the field of convergent Puiseux series over $\mathbb{C}$, 
and denote the usual non-trivial valuation on 
$K$ by $\val : K^{*} \to \mathbb{R}$, 
that is, 
\[ 
\val : K^* \longrightarrow \mathbb{R} \ ;\  
\sum_{k=k_0}^{\infty}b_{k}t^{\frac{k}{N}} \longmapsto -\frac{k_0}{N},
\]
where $K^{*}:=K \setminus \{0\}$ 
and $b_{k_0} \neq 0$. 

For a reduced polynomial 
\[ 
F(z,w)=\sum_{(i,j) \in \Delta_F \cap \mathbb{Z}^2}c_{ij} z^i w^j 
\in K[z,w] 
\]
over $K$, 
we denote by 
$\mathrm{Supp}(F)$ 
and 
$\Delta_F$ 
the \textit{support} of $F$ 
and 
the \textit{Newton polytope} of $F$, respectively, 
that is, 
$\mathrm{Supp}(F)
:=\{ (i,j) \in \mathbb{R}^2 ; c_{ij} \neq 0 \}$ 
and 
$\Delta_F
=\mathrm{Conv}(\mathrm{Supp}(F)) \subset \mathbb{R}^2$.
Throughout this paper, 
we assume that 
the Newton polytope $\Delta_F$ of $F$ is 2-dimensional. 

The valuation map 
$\VAL : (K^*)^2 \to \mathbb{R}^2$
is defined by 
$(z,w) \mapsto (\val (z), \val (w))$, 
which is a homomorphism. 

\begin{defi}
The closure
\[
	T_F
	:= \mathrm{Closure}({\VAL(\{p \in (K^*)^2 ; F(p)=0\})})
	\subset \mathbb{R}^2
\]
with usual topology on $\mathbb{R}^2$ 
is called the \textit{(plane) tropical curve defined by $F$}.   
\end{defi}

Remark that a tropical curve 
$T_F$ 
has a structure of a 
$1$-dimensional simplicial complex, cf.\cite{Mik_enumerative}. 
We call 
a $1$-simplex an \textit{edge} 
and 
a $0$-simplex a \textit{vertex} 
as usual.

Let $F$ be a polynomial over $K$. 
We introduce the Duality Theorem 
which gives a correspondence 
between 
the tropical curve $T_F$ defined by $F$ 
and the Newton polytope $\Delta_F$ of $F$. 
Let 
$\Delta_{\nu}(F)$ 
be the 3-dimensional polygon defined by 
\[
	\Delta_{\nu}(F)
	:=\mathrm{Conv}
	\{ (i,j,-\val (c_{ij}))\in \mathbb{R}^2 \times \mathbb{R}
	; (i,j) \in \mathrm{Supp}(F) \}
	\subset \mathbb{R}^3
\]
and 
$\nu_{F}: \Delta_F \to \mathbb{R}$
be the function defined by 
\[
	(i,j) \longmapsto 
	\min\{ x \in \mathbb{R} ; (i,j,x) \in \Delta_{\nu}(F) \}, 
\]
which is a continuous piecewise linear convex function. 
We can get the following three kinds of sub-polytopes of 
$\Delta_F$ from $\nu_{F}$: 
\begin{itemize}
\item 
linearity domains of $\nu_F$: 
$\Delta_1, \cdots ,\Delta_N$,
\item 
1-dimensional polytopes: 
$\Delta_i \cap \Delta_j \neq \emptyset$ and $\neq \{ pt \}$,
\item 
0-dimensional polytopes: 
$\Delta_{i_1} \cap \Delta_{i_2} \cap \Delta_{i_3} \neq \emptyset$, 
\end{itemize}
where a \textit{linerity domain} of $\nu_F$ 
means a maximal sub-polytope $R$ of the domain $\Delta_F$
such that the restriction $\nu_F|_{R}$ is an affine linear function.  
These polytopes give a 
subdivision of 
$\Delta_F$, 
which we denote by 
$\subd(F)$. 
In particular, 
we call 
a 1-dimensional  
and 
a 0-dimensional polytope 
a \textit{vertex} 
and 
a \textit{edge} 
of 
$\subd(F)$, respectively.

For edges of a tropical curve, 
a certain weight 
$ w:(\text{edges of }T) \to \mathbb{N} $ 
is defined by directional vectors of edges canonically. 
We omit the definition since 
we don't use it in this paper. 

We can find the following claim in \S 2.5.1 of \cite{IMS}. 
A proof in general dimension can be found in \cite{Mik_enumerative}. 

\begin{thm}[Duality Theorem]\label{DualThm}\it
The subdivision 
$\subd(F)$ 
is dual to the tropical curve
$T_F$ 
in the following sense: \\
(1)\;
the components of 
$\mathbb{R}^2 \setminus T_F$ 
are in 1-to-1 correspondence 
with the vertices of the subdivision 
$\subd(F)$, \\
(2)\;
the edges of 
$T_F$ 
are in 1-to-1 correspondence 
with the edges of the subdivision 
$\subd(F)$ 
so that an edge 
$E \subset T_F$ 
is dual to an orthogonal edge of the subdivision 
$\subd(F)$, 
having the lattice length equal to 
$w(E)$, 
which is a weight of 
$E$, \\
(3)\;
the vertices of 
$T_F$ 
are in 1-to-1 correspondence with 
the polytopes 
\[ \subd(F) : \Delta_{1}, \dots, \Delta_{N} \]
so that the valency of a vertex of 
$T_F$ 
is equal to the number of sides of the dual polygon.
\end{thm}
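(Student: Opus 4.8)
\section*{Proof proposal for Theorem~\ref{DualThm}}

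The plan is to factor the statement into two logically independent parts. First, I would identify the tropical curve $T_F$, which is defined analytically as a closure of a $\VAL$-image, with the \emph{corner locus} of an associated tropical polynomial; this is the Fundamental (Kapranov) Theorem in the plane. Second, with that identification in hand, all three duality assertions reduce to the combinatorics of convex (Legendre) duality between the convex piecewise-linear function $\nu_F$ on $\Delta_F$ and its dual on $\mathbb{R}^2$. Concretely, introduce
\[
\Phi(X,Y) := \max_{(i,j)\in \mathrm{Supp}(F)}\bigl\{\val(c_{ij}) + iX + jY\bigr\},
\]
a convex piecewise-linear function on $\mathbb{R}^2$, and let $\mathcal{C}$ be its corner locus, i.e.\ the set of $(X,Y)$ at which the maximum is attained by at least two distinct lattice points $(i,j)$.

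The inclusion $T_F\subseteq\mathcal{C}$ is the easy direction. If $F(z,w)=0$ and $(X,Y)=\VAL(z,w)$, then the monomial $c_{ij}z^iw^j$ has valuation $\val(c_{ij})+iX+jY$; were the maximal such value attained by a single monomial, its leading Puiseux coefficient could not be cancelled by the others and $F(z,w)$ would be nonzero, a contradiction. Hence the maximum is attained at least twice, so $(X,Y)\in\mathcal{C}$, and taking closures gives $T_F\subseteq\mathcal{C}$. The reverse inclusion $\mathcal{C}\subseteq T_F$ is the analytic core, and I expect it to be the main obstacle. For a given $(X,Y)\in\mathcal{C}$ one must produce $(z,w)\in(K^*)^2$ with $F(z,w)=0$ and $\VAL(z,w)=(X,Y)$. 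The strategy is to fix $z$ to be a Puiseux series of valuation $X$ with a suitably generic leading coefficient, substitute, and solve the resulting one-variable polynomial $F(z,w)=\sum_j A_j(z)\,w^j$ over $K$ by the classical Newton--Puiseux method: the corner-locus condition at $(X,Y)$ forces the Newton polygon of this polynomial to have an edge of the slope prescribed by $Y$, hence a root $w$ of valuation exactly $Y$, while genericity of the leading coefficient of $z$ prevents the cancellation in $A_j$ that would shift the valuation. Executing this carefully over the \emph{convergent} Puiseux field $K$ (including convergence of the series so constructed, and the harmless special cases where the two tying monomials share a coordinate) is precisely the content attributed to \cite{IMS,Mik_enumerative}, and is where the real work lies.

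For the combinatorial part I would establish the three correspondences through the duality between $\nu_F$ and $\Phi$. Convexity of $\nu_F$ guarantees that $\subd(F)$ is a genuine polyhedral subdivision and that $\Phi$ is convex piecewise-linear, so $\mathcal{C}$ is a bona fide $1$-complex. For each $(X,Y)$ let $S(X,Y)\subseteq\mathrm{Supp}(F)$ record the maximizing lattice points; then $\mathrm{Conv}(S(X,Y))$ is a cell of $\subd(F)$, and $(X,Y)\mapsto \mathrm{Conv}(S(X,Y))$ is an inclusion-reversing bijection between the cells of the polyhedral decomposition of $\mathbb{R}^2$ cut out by $\Phi$ and the cells of $\subd(F)$, matching dimension $d$ with dimension $2-d$. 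Unwinding this bijection yields the three claims. On an open region where $S(X,Y)$ is a single point $(i,j)$, $\Phi$ is affine with gradient $(i,j)$; these regions are exactly the connected components of $\mathbb{R}^2\setminus\mathcal{C}$ and correspond to the vertices $(i,j)$ of $\subd(F)$, giving (1). Along a one-dimensional piece of $\mathcal{C}$ two monomials $(i,j),(i',j')$ tie, so the piece lies in the line $(i-i')X+(j-j')Y=\val(c_{i'j'})-\val(c_{ij})$, which is orthogonal to the segment $\overline{(i,j)(i',j')}$ of $\subd(F)$; this gives the orthogonal bijection of (2), and a direct computation identifies the lattice length of the dual edge with the weight $w(E)$.

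Finally, at an isolated point of $\mathcal{C}$ the maximizing set $S(X,Y)$ spans a two-dimensional cell $\Delta_k$ of $\subd(F)$, and the local branches of $\mathcal{C}$ emanating from that point are in bijection with the edges of $\partial\Delta_k$ (each such edge records a pair of monomials of $\Delta_k$ that tie along a neighbouring $1$-cell of $\mathcal{C}$), so the valency of the vertex equals the number of sides of $\Delta_k$, giving (3). Combining the identification $T_F=\mathcal{C}$ from the first part with these three correspondences completes the proof. Thus the only substantial hurdle is the surjectivity of $\VAL$ onto $\mathcal{C}$ in the first part; everything in the second part is convex-duality bookkeeping.
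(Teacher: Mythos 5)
The paper does not prove Theorem~\ref{DualThm} at all: it states the result and refers to \S 2.5.1 of \cite{IMS} for the planar case and to \cite{Mik_enumerative} for a proof in general dimension, so there is no internal argument to compare yours against. Your sketch is precisely the standard proof found in those references: identify $T_F$ with the corner locus $\mathcal{C}$ of the tropicalization $\Phi$ (Kapranov's theorem), then read the three correspondences off the convex duality between $\nu_F$ and $\Phi$. As a blueprint it is sound, and you correctly isolate where the real work lies --- the surjectivity of $\VAL$ onto $\mathcal{C}$ via Newton--Puiseux over the convergent Puiseux field, which is nontrivial here because $K=\mathbb{C}\{\{t\}\}$ consists of \emph{convergent} series, so algebraic closedness (convergence of the constructed roots) must be checked rather than assumed.

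Two small repairs to your first part. Since $\val$ takes values in $\mathbb{Q}$, the image $\VAL(\{F=0\})$ meets only the rational points of $\mathcal{C}$, so your Newton--Puiseux construction can realize a given $(X,Y)\in\mathcal{C}$ only when $(X,Y)\in\mathbb{Q}^2$; you then need the easy but necessary observation that the rational points are dense in every cell of $\mathcal{C}$, which together with the closure in the paper's definition of $T_F$ yields $\mathcal{C}\subseteq T_F$ (this is exactly why the closure appears in the definition). Second, in part (3) the points dual to the two-dimensional cells $\Delta_k$ are the $0$-cells of the polyhedral decomposition cut out by $\Phi$, not ``isolated points of $\mathcal{C}$'' --- every vertex of a tropical curve has edges emanating from it; with that phrasing corrected, your count of local branches against the sides of $\partial\Delta_k$ is the right argument. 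The combinatorial second half is otherwise routine convex-duality bookkeeping, as you say.
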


We call the subdivision 
$\subd(F)$ 
the \textit{dual subdivision} 
of $T_F$. 
By Theorem~\ref{DualThm}, 
we can regard a tropical curve 
as a dual subdivision of the Newton polytope 
of its defining polynomial. 

\section{Main Results}
In this section, 
we first introduce the precise definition of 
tropical sub-curves which we mentioned in the introduction. 
Let 
$F \in K[x,y]$ 
be a polynomial over $K$ 
and 
\[ \subd (F): \Delta_1 , \dots , \Delta_N \]
be the dual subdivision of $T_F$. 
Because of the structure theorem in tropical geometry, 
a tropical hypersurface has the structure of a polyhedral complex.
In particular, 
a plane tropical curve is an embeded plane graph in $\mathbb{R}^2$.

Let $[u,v]$ denote the edge of 
the tropical curve $T_F$ 
whose endpoints are 0-cells $u$ and $v$. 
Set $[u,v)=[u,v]\setminus \{v\}$. 
We allow that one of the endpoints is at $\infty$. 
In this case, 
the other endpoint is contained in the 0-cells of the curve.  
Note that $[u,\infty]=[u,\infty)$. 

Let 
\[ 
\subd' (F) : \Delta_{k_1} , \dots , \Delta_{k_m}  
\]
be a subset of $\subd(F)$ 
such that 
$\bigcup \mathrm{Sd}'(F) \setminus \mathrm{Sd}^{[0]}(F)$ 
is connected, 
where 
$\mathrm{Sd}^{[0]}(F)$ 
is the set of vertices of 
$\mathrm{Sd}(F)$.
Let $\Delta'$ be 
a sub-polyhedron of 
$\Delta_{F}$ 
given as the union of 
$\subd'(F)$. 

Let 
$V=\{ v_1 , \dots ,v_N \}$ 
and
$E=\{ [u,v]\ ;\ u,v \in V \}$
be the set of vertices and edges of 
$T_F$ 
respectively. 

\begin{defi}\label{resttrop}
A subset of 
$T_F$ 
is called the 
\textit{tropical sub-curve 
with respect to 
$\Delta'$} 
if it has the structure of 
the metric (open) sub-graph $(V',E')$ 
of the tropical curve $T_F$ 
which satisfies the following conditions: \\
(1)\; 
the set of vertices 
$V' \subset V$ 
is given by 
$\{ v_{k_1},\dots ,v_{k_m} \}$, \\ 
(2)\;
the set of edges $E'$ is given by the following manners: 
for each $[u,v] \in E$,
\begin{itemize}
\item[(i)] 
if $u,v \in V'$ then $[u,v] \in E'$,
\item[(ii)]
if $v = \infty$ and $u \in V'$ 
then $[u,v] \in E'$,
\item[(iii)] 
if $u \in V'$ and $ v \in {V} \setminus V'$ 
then $[u,v') \in E'$, 
where $v'$ is taken as the middle point of 
$[u,v]$.  
\end{itemize} 
We denote the tropical sub-curve of 
$T_F$ 
with respect to 
$\Delta'$ 
by 
$T_F|_{\Delta'}$. 
\end{defi}

\begin{exa}\label{resttropex}
(1)\;
Let $F$ be a polynomial over $K$ given by 
\begin{align*} 
F= 
& 1+tz+tw+t^3z^2+t^2zw+t^3w^2+t^6z^3+t^4z^2w+t^4zw^2+t^6w^3 \\
& +t^{10}z^4+t^7z^3 w +t^{12}z^2w^2 +t^7zw^3 + t^{10}w^4 
  +t^{15}z^5+t^{15}w^5. 
\end{align*}
The Newton polytope 
$\Delta_F$ of $F$ 
is 
$\mathrm{Conv}\{ (0,0),(0,5),(5,0) \}$. 
See on the left in Figure~1. 
This polynomial $F$ is $F_f$ in Theorem~\ref{MainThm} 
for the singularity of 
$x^5 +x^2 y^2 +y^5$ 
at the origin. 
The polyhedron 
$\Delta'$ 
in the figure is 
$\Gamma_-(f)$ 
for the singularity 
$(f,0)$. 
The tropical sub-curve 
$T_F|_{\Gamma_{-}(f)}$ 
with respect to 
$\Gamma_-(f)$ is as shown on the right.
Since 
$\mu(f)= 11, v(T_F|_{\Gamma_{-}(f)})=6$ and $r(T_F|_{\Gamma_{-}(f)})=5 $,  
the equality 
$\mu(f) = v(T_F|_{\Gamma_{-}(f)})+r(T_F|_{\Gamma_{-}(f)})$ 
in Theorem \ref{MainThm} is verified.

\begin{figure}[htbp] \label{fig}
\includegraphics[scale=0.6]{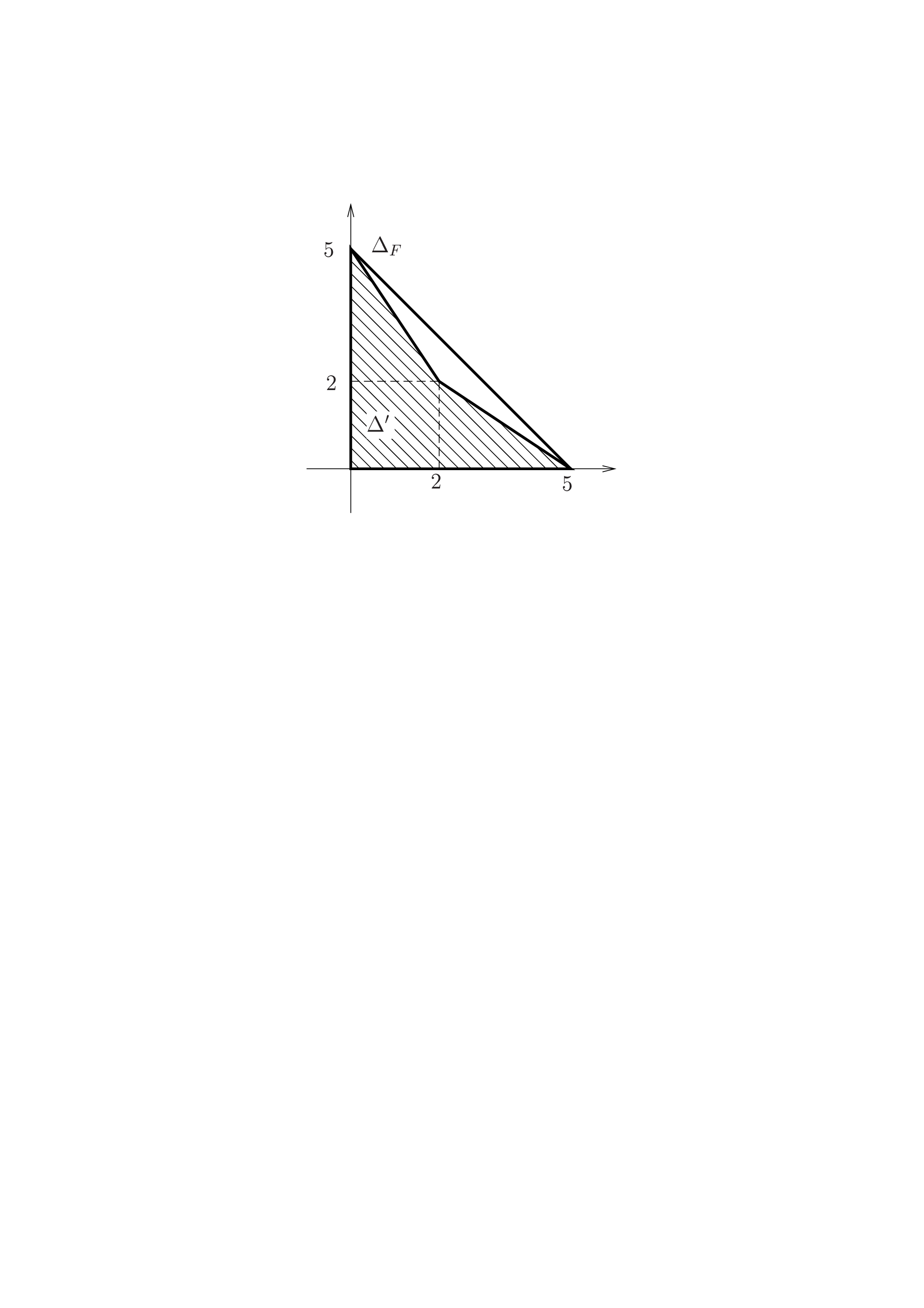}
\hspace{15mm}
\includegraphics[scale=0.5]{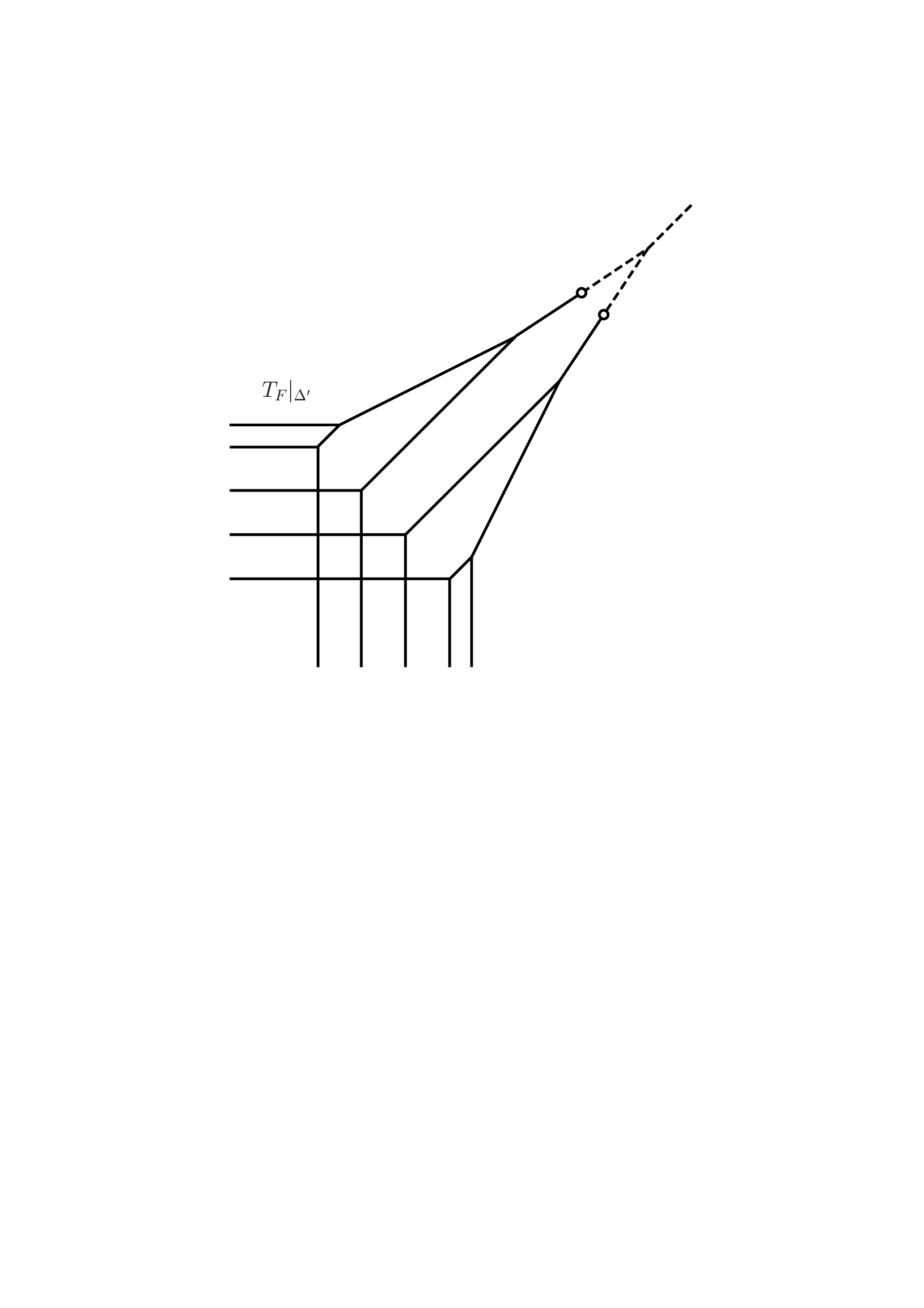}
\caption{$\Delta_F$ and $T_F|_{\Gamma_-(f)}$ in Example \ref{resttropex} (1).}
\end{figure}

(2)\; 
Let $F$ be a polynomial over $K$ given by  
\[
F= 1+tz+tw+ t^3z^2+t^2zw+t^3w^2+t^6w^3.
\]
The Newton polytope $\Delta_F$ of $F$ is 
$\mathrm{Conv}\{ (0,0),(2,0),(0,3) \}.$
See on the left in Figure~2. 
This polynomial $F$ is $F_f$ in Theorem \ref{MainThm} 
for the singularity of $x^2+y^3$ at the origin. 
The polyhedron 
$\Delta_F$ 
in the figure is 
$\Gamma_-(f)$ 
for the singularity $(f,0)$. 
The tropical sub-curve 
$T_F|_{\Gamma_{-}(f)}$ 
with respect to 
$\Gamma_-(f)$ is as shown on the right.
Since 
$\mu(f)=2 , v(T_F|_{\Gamma_{-}(f)})=1$ and $r(T_F|_{\Gamma_{-}(f)})=1 $,  
the equality 
in Theorem \ref{MainThm} holds.

\begin{figure}[htbp] \label{fig}
\includegraphics[scale=0.8]{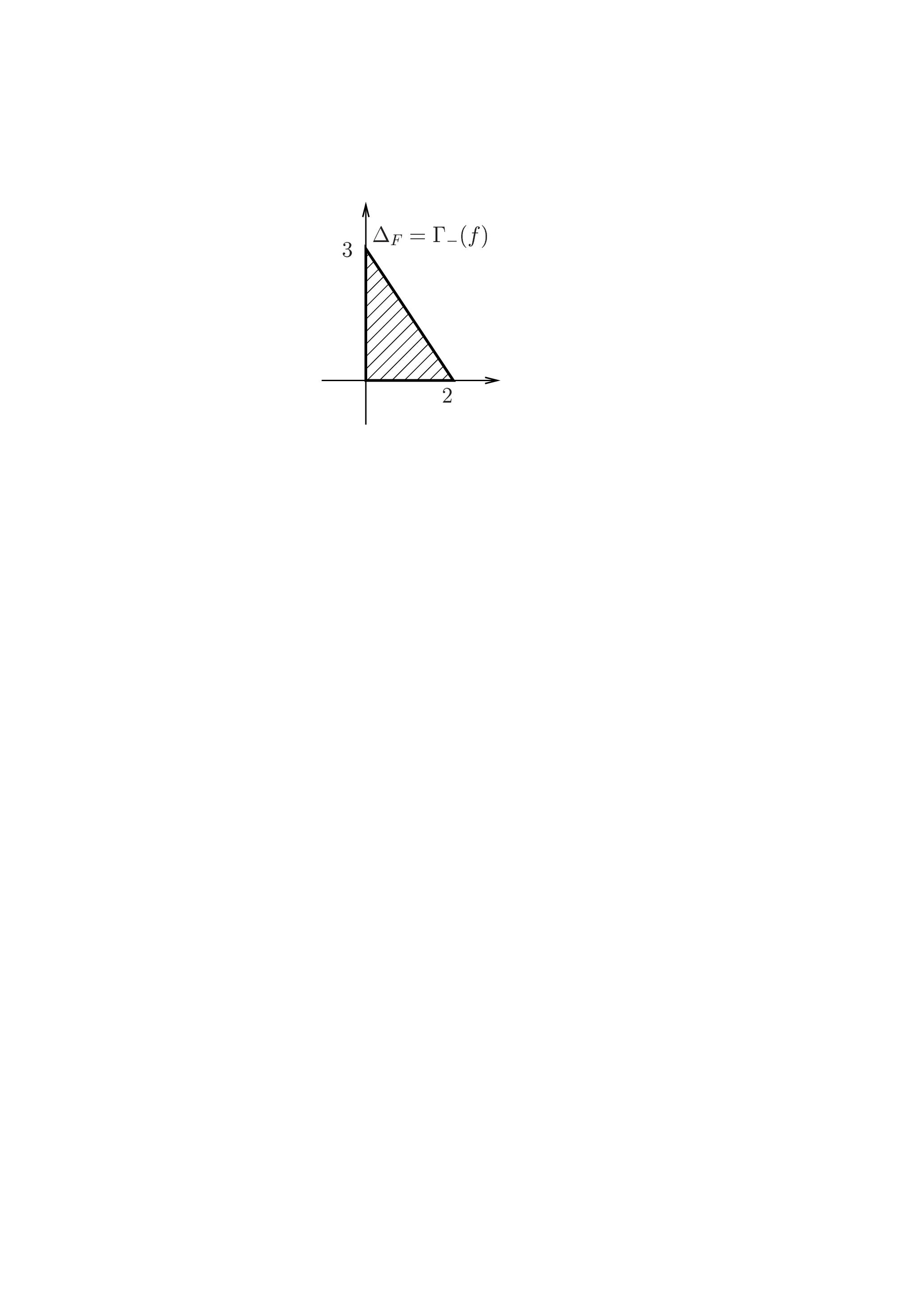}
\hspace{15mm}
\includegraphics[scale=0.7]{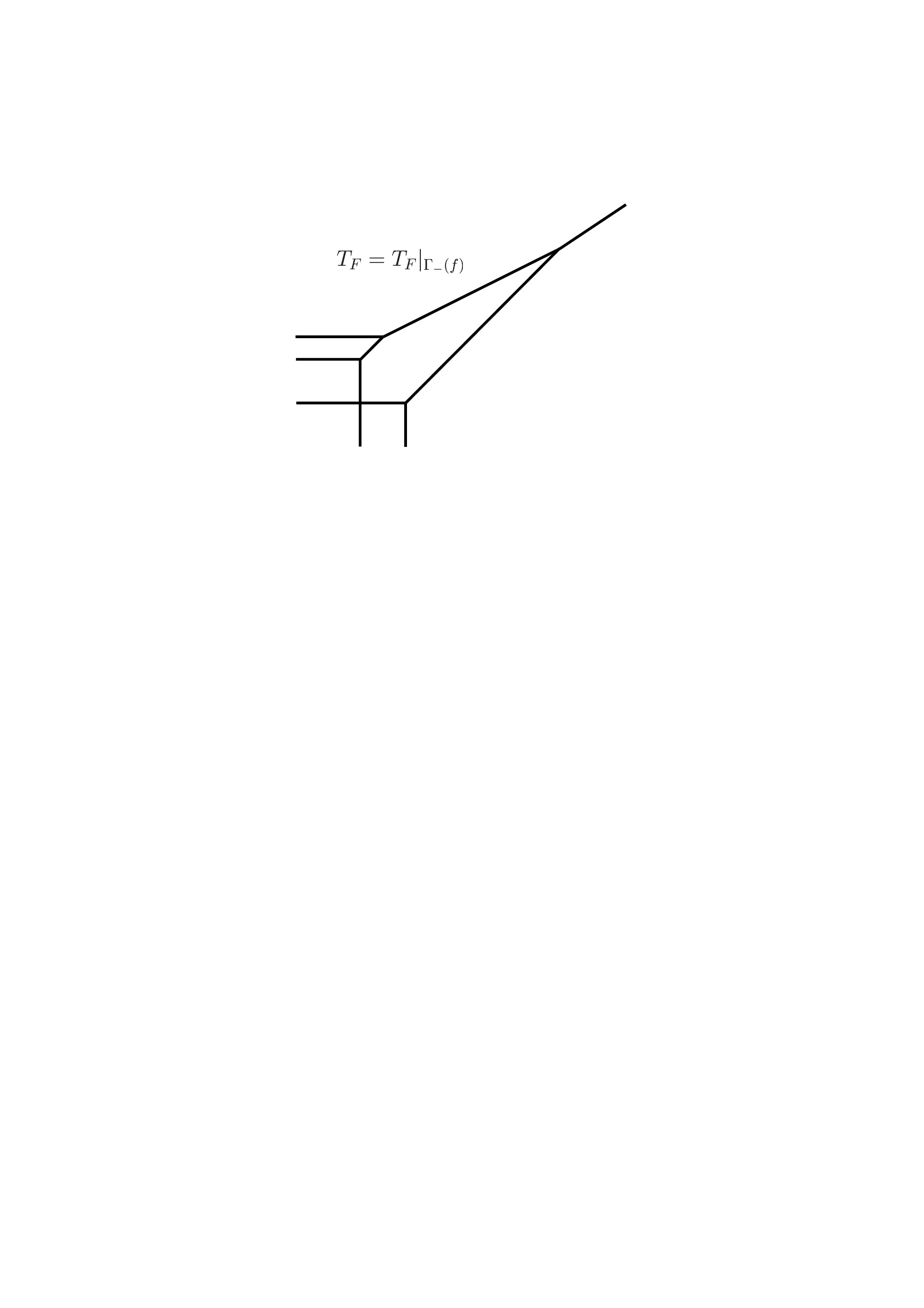}
\caption{$\Delta_F$ and $T_F$ in Example \ref{resttropex} (2).}
\end{figure}

\end{exa}

Suppose that $f$ is convenient. 
For the lattice points 
$(i,j) \in \Gamma_-(f) \cap \mathbb{Z}^2$, 
we define a map 
$\nu_f|_{
\Gamma_-(f) \cap \mathbb{Z}^2}:
\Gamma_-(f) \cap \mathbb{Z}^2 \to \mathbb{R}$ 
by 
\[ \nu_f(i,j)=a_0 + a_1+ \cdots +a_i+b_0+b_1+ \cdots +b_j \] 
where 
$\{ a_k \}_{k \in \mathbb{N}}, \{b_k\}_{k \in \mathbb{N}}$ 
are non-negative strictly increasing sequences 
of integers. 
We then extend it to the whole domain 
$\Gamma_-(f)$ 
as a continuous piecewise linear function and 
obtain a map 
$\nu_f:\Gamma_-(f) \to \mathbb{R}$. 
Taking sufficiently large values for 
$\nu_f$ 
at the lattice points of the Newton boundary of $f$, 
we may assume that 
the other sub-polytopes are triangles with area $1/2$.

\begin{defi}
We call the subdivision of 
$\Gamma_{-}(f)$ 
defined as above the 
\textit{special subdivision of $\Gamma_{-}(f)$} 
and each square in this subdivision
the \textit{special square}. 
\end{defi}

\begin{lem}\label{lem1}\it
Let $p,q \in \mathbb{N}$ be coprime integers.
The number of special squares in the special subdivision of
$\Delta_{(p,q)}=\mathrm{Conv}\{ (0,0),(p,0),(0,q) \} \subset \mathbb{R}^2$ 
is 
${(p-1)(q-1)}/{2}$. 
\end{lem}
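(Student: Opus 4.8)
The plan is to identify the special squares of the special subdivision of $\Delta_{(p,q)}$ with the unit lattice squares lying inside the triangle, and then to count these by a bijection with the interior lattice points together with Pick's formula.

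First I would exploit the separable form of $\nu_f$. Setting $A(i)=a_0+\cdots +a_i$ and $B(j)=b_0+\cdots +b_j$, we have $\nu_f(i,j)=A(i)+B(j)$, and since $\{a_k\}$ and $\{b_k\}$ are strictly increasing, both $A$ and $B$ are strictly convex. The main step is to show that the $2$-dimensional linearity domains of $\nu_f$ which are squares are exactly the closed unit lattice squares $[i,i+1]\times[j,j+1]$ contained in $\Delta_{(p,q)}$. For the square with lower-left corner $(i_0,j_0)$ I would test the affine function $\ell(x,y)=a_{i_0+1}(x-i_0)+b_{j_0+1}(y-j_0)+\nu_f(i_0,j_0)$. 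A direct computation gives
\[ \nu_f(i,j)-\ell(i,j)=\bigl(A(i)-A(i_0)-a_{i_0+1}(i-i_0)\bigr)+\bigl(B(j)-B(j_0)-b_{j_0+1}(j-j_0)\bigr), \]
and strict convexity makes each bracket nonnegative, vanishing only at $i\in\{i_0,i_0+1\}$, respectively $j\in\{j_0,j_0+1\}$. Thus $\ell$ is a supporting affine function touching $\nu_f$ exactly at the four corners, so the square is a flat face of the graph of $\nu_f$; the same strict convexity across each grid line prevents two neighbouring cells from lying in a common affine piece, so each such face is a single unit square rather than a larger rectangle.

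Next I would dispose of the boundary modification. Because $\gcd(p,q)=1$, the only lattice points on the Newton boundary (the hypotenuse) are the two vertices $(p,0)$ and $(0,q)$, at which $\nu_f$ is assigned a large value. A unit square contained in $\Delta_{(p,q)}$ has its top-right corner $(i+1,j+1)$ strictly below the line $x/p+y/q=1$, so the whole square is strictly interior and has neither $(p,0)$ nor $(0,q)$ among its corners; hence raising those two values leaves every such square a face and only reshapes the thin strip between the outermost squares and the hypotenuse, which by the hypothesis of the construction becomes area-$1/2$ triangles. Therefore the special squares are precisely the unit lattice squares inside $\Delta_{(p,q)}$, and sending each to its top-right corner is a bijection onto the interior lattice points of the triangle (no lattice point lies on the open hypotenuse, again since $\gcd(p,q)=1$).

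It then remains to count interior lattice points. The triangle has area $pq/2$, and its boundary lattice points number $(p-1)+(q-1)+3=p+q+1$ (the interiors of the two legs contribute $p-1$ and $q-1$ points, the hypotenuse contributes none beyond its endpoints, and there are three vertices). Pick's formula $\mathrm{Area}=I+B/2-1$ then gives $I=pq/2-(p+q+1)/2+1=(p-1)(q-1)/2$, the claimed number of special squares. The main obstacle is the first step: making precise that the lower envelope of the separable strictly convex function $\nu_f$ cuts the interior into unit squares, and that the prescribed large boundary values refine only the hypotenuse strip into unimodular triangles without disturbing the interior squares. Once this structural description is secured, the lattice-point bijection and the Pick computation are routine.
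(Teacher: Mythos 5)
Your proof is correct, but it takes a genuinely different route from the paper's. The paper never mentions interior lattice points in this lemma: it doubles the triangle to the rectangle $\mathrm{Conv}\{(0,0),(p,0),(0,q),(p,q)\}$, notes that the $pq$ unit cells split, by the central symmetry of the rectangle, into $\lambda$ cells on each side of the diagonal plus the $I$ cells meeting it (so $pq=2\lambda+I$), and then computes $I=p+q-1$ by decomposing the rectangle into $p$ vertical strips and carrying out a floor-function calculation. Your argument instead sends each special square to its top-right corner, which (using $\gcd(p,q)=1$ to rule out lattice points on the open hypotenuse) gives a bijection onto $\mathrm{int}(\Delta_{(p,q)})\cap\mathbb{Z}^2$, and then counts interior points by Pick's formula; the arithmetic $I=pq/2-(p+q+1)/2+1=(p-1)(q-1)/2$ is right. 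Your route buys two things: first, an explicit verification, via the supporting affine function $\ell$, that the separable convex lift really does cut the interior of the triangle into unit squares and that raising the values at $(p,0)$ and $(0,q)$ does not disturb them --- a structural fact the paper uses silently in both this proof and the proof of Theorem~\ref{MainThm}; second, a conceptual link to the latter half of the paper's proof of Theorem~\ref{MainThm}, where precisely this correspondence between special squares and interior lattice points of $\Gamma_-(f)$ is invoked, so your lemma proof and the paper's later argument become two instances of one idea. What the paper's route buys is independence from Pick's theorem: it stays entirely within elementary cell counting. One small imprecision on your side: a unit square contained in $\Delta_{(p,q)}$ need not be \emph{strictly interior} to the triangle (it may lie along the coordinate axes); what matters, and what is true by the computation you give, is that it is disjoint from the hypotenuse, so none of its corners is one of the Newton-boundary lattice points $(p,0)$, $(0,q)$ at which $\nu_f$ is enlarged.
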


\begin{proof}
Let 
$\hat{\Delta}$ 
be the rectangle given by 
\[
\hat{\Delta}
=\mathrm{Conv}\{ (0,0),(p,0),(0,q),(p,q) \}\subset \mathbb{R}^2.
\]
We consider the special subdivision of 
$\hat{\Delta}$. 
We decompose it into $p$ vertical rectangles 
\[ 
\hat{\Delta}_i =
([i,i+1] \times \mathbb{R})\cap \hat{\Delta} \subset \hat{\Delta},
\ \ \ i=0, \dots ,p-1. 
\]
The special subdivision of 
$\hat{\Delta}$ 
induces a special subdivision of 
each 
$\hat{\Delta}_i$.  
Let 
$\ell$ 
be the segment connecting $(p,0)$ and $(0,q)$. 
We denote by $I$ 
the number of special squares in $\hat{\Delta}$ which intersect $\ell$.
Similarly, we denote by $I_i$ 
the number of special squares in $\hat{\Delta}_i$ which intersect $\ell$.
Obviously $I = \sum_{i=0}^{p-1}I_i$.

Let $\lambda$ be the number of special squares of $\Delta_{(p,q)}$.
Notice that $\lambda = \frac{1}{2}(pq-I) $ since $pq=2 \lambda +I$.
Thus it is enough to show 
$I=p+q-1$. 
Without loss of generality, 
we may assume $p < q$.
Let $k$, $l$ be integers such that 
$ q=pk+l$ and $0<l<p$. 
The segment $\ell$ can be denoted as 
\[ 
\Bigl( x,-\frac{q}{p}x+q \Bigr) =\Bigl( x,(pk+l-kx) -\frac{l}{p}x \Bigr)
,\ \ \ x \in [0,p]. 
\]
Set $\xi(x)=\frac{l}{p}x$. 
Then, 
$I_i$ is calculated as  
\begin{align*}
I_i & = pk+l-nk-\lfloor \xi(i) \rfloor - \Bigl\{ pk+l-(i+1)k -\lfloor \xi(i+1) \rfloor -1 \Bigr\}\\
    & = k+1 - \lfloor \xi(i) \rfloor + \lfloor \xi(i+1) \rfloor 
\end{align*}
for $i=1 , \dots , p-2$ and 
\[ I_0 = I_{p-1}= k+1, \]
where 
$\lfloor \alpha \rfloor$ 
means the largest integer not greater than $\alpha \in \mathbb{R}$. 
Thus, we obtain
\begin{align*}
I &= \sum_{i=0}^{p-1}I_i 
  = I_0 + (p-2)(k+1)+\lfloor \xi(p-1) \rfloor - \lfloor \xi(1) \rfloor +I_{p-1} \\
 &= pk +p +\lfloor \frac{l}{p}(p-1) \rfloor 
  = p+q-1. 
\end{align*}
\end{proof}

\begin{proof}[Proof of Theorem \ref{MainThm}]
Choose a polynomial $F$ such that 
the Newton polytope $\Delta_F$ coincides with 
$\mathrm{Conv}(\Gamma_-(f))$. 
To decide coefficients of $F$, 
we take the convex function
$\nu : \Delta_F \cap \mathbb{Z}^2 \to \mathbb{R}$
as a linear extension of 
$\nu_f$ 
used in the definition of the special subdivision of 
$\Gamma_-(f)$,  
and define $F$ as 
the \textit{patchworking polynomial} defined by $\nu$, 
that is, 
\[ 
F(z,w)
=\sum_{(i,j) \in {\Delta_F \cap \mathbb{Z}^2}}
t^{-\nu(i,j)}z^iw^j. 
\] 

In the rest of the proof, 
we check 
$T_F|_{\Gamma_-(f)}$ 
satisfies the equality in the assertion.  
To calculate the number of special squares, 
we decompose 
$\Gamma_{-}(f)$ 
into two sub-polyhedrons as follows. 
Let $p,q \in \mathbb{N}$ be coprime integers. 
We denote the intersection points of 
the Newton boundary $\Gamma(f)$ of $f$ 
and the lattice by 
\[ \Gamma(f) \cap \mathbb{Z}^2 =\{ (0,q),(P_1 , Q_1),\dots ,(P_{n-1},Q_{n-1}),(p,0) \}, \]
where $0 < P_1 < \dots < P_{n-1} <p$.
We set $P_0 = 0, P_n=p, Q_0=q, Q_n =0$ and define  
\[ p_i=| P_i - P_{i-1} |,\ q_i = |Q_i - Q_{i-1}|, \ \ \ i=1,\dots,n. \]
Notice that $p=p_1 + \dots +p_n$ and $\ q = q_1 + \dots + q_n$.
For $i=1,\dots,n$,
we define the subset $\Delta_i$ of $\Gamma_{-}(f)$ as  
\[ \Delta_i= \mathrm{Conv} \{ (P_{i-1},Q_{i-1}),(P_{i-1},Q_i),(P_i , Q_i) \} = \Delta_{(p_i , q_i)} \]
and 
\[
 \Xi_1 := \bigcup_{i=1}^{n} \Delta_{i} \subset \Gamma_{-}(f), \;\;\;
 \Xi_2 := \mathrm{Closure} \bigl( \Gamma_{-}(f) \setminus \Xi_1 \bigr) \subset \Gamma_{-}(f). 
\]
Then, $\Gamma_{-}(f)$ decomposes as 
$ \Gamma_{-}(f)=\Xi_1 \cup \Xi_2$. 
For $i=1,2$, 
we denote by $|\Xi_i|$ 
the number of special squares 
contained in the special subdivision of $\Xi_i$ 
induced by that of $\Gamma_{-}(f)$.
Then, using Lemma~\ref{lem1}, we have
\begin{align*}
|\Xi_1| &= \sum_{i=1}^n \frac{1}{2}(p_i -1)(q_i-1), \\
|\Xi_2| &= \sum_{i=1}^{n-1}p_i \cdot (q_i + \dots + q_n) \\
&=\sum_{i=1}^{n-1}p_i \cdot \{ q -(q_1 + \dots + q_{i-1}) \}
          = \mathrm{Vol}(\Xi_2) .
\end{align*}

Next we will show the following equalities: 
\begin{align}\label{numofsuq}
& | \Xi_1 | + | \Xi_2 | = v(T_F|_{\Gamma_-(f)}), \\ 
& | \Xi_1 | + | \Xi_2 | -(n-1) = r(T_F|_{\Gamma_-(f)}).
\end{align}
By Theorem~\ref{DualThm}, 
the correspondence between 
subdivisions and tropical curves, introduced in Theorem~\ref{DualThm}, 
gives a 
1-to-1 correspondence of 
parallerograms and 4-valent vertices.
In our case, 
any 4-valent vertex corresponds to a special square. 
Thus, the number of special squares, $|\Xi_1|+|\Xi_2|$,
coincides with the number of 4-valent vertices of $T_F|_{\Gamma_-(f)}$. 
Hence equality (1) holds. 

We prove the other equality. 
There is a 1-to-1 correspondence between
\[ 
\{ 
\text{special square $\boxtimes$ contained 
in special subdivision of }\Gamma_{-}(f) 
\ ;\ V(\boxtimes) \cap \Gamma(f) = \emptyset 
\} 
\]
and
\[ \mathrm{int}(\Gamma_{-}(f)) \cap \mathbb{Z}^2, \]
where 
$V(\boxtimes)$ 
is the set of vertices of 
a special square $\boxtimes$ in $\Gamma_{-}(f)$.
Moreover, by the Duality Theorem~\ref{DualThm} of tropical curves, 
we have a 1-to-1 correspondence between 
the bounded regions contained 
in the complement of 
$T_F|_{\Gamma_-(f)} \subset \mathbb{R}^2$ 
and 
the interior lattice points 
$\mathrm{int}(\Gamma_{-}(f)\cap \mathbb{Z}^2)$ 
in $\Gamma_-(f)$. 
Since 
\begin{align*}
\sharp 
&\{ 
\text{special square $\boxtimes$ contained in special subdivision of }\Gamma_{-}(f) 
; V(\boxtimes) \cap \Gamma(f) \neq \emptyset \} \\
&= n-1, 
\end{align*}
we get 
\begin{align*}
r(T_F|_{\Gamma_-(f)}) 
& = \sharp ( \mathrm{int}(\Gamma_{-}(f)) \cap \mathbb{Z}^2 ) \\
& = | \Xi_1 | + | \Xi_2 | - (n-1).
\end{align*}
Thus equality (2) holds.

Set 
$L=\sum_{i=1}^n p_i q_i$. 
From equality (1), 
we get $L = 2 |\Xi_1| + (p+q) -n$ 
as
\begin{align*}
|\Xi_1| 
&= \sum_{i=1}^n \frac{1}{2}(p_i -1)(q_i-1) 
= \frac{1}{2} \sum_{i=1}^n (p_i q_i -p_i - q_i +1) \\
&= \frac{1}{2} \bigl\{ L -(p+q) +n \bigr\}. 
\end{align*}

For the Milnor number $\mu(f)$, 
we use Kouchnirenko's formula in \cite{Kouch}: 
\[ \mu(f) =2V_2 - V_1 +1, \]  
where 
\[ V_2 = \frac{L}{2}+ |\Xi_2|,\ V_1 = p+q. \]
Thus, we get 
\begin{align*}
\mu(f) 
&= 2 V_2 -V_1 +1 \\
&= L +2| \Xi_2 | -(p+q) +1 \\
&= \bigl\{ 2 |\Xi_1| + (p+q) -n \bigr\} +2|\Xi_2| -(p+q) +1 \\
&=  v(T_F|_{\Gamma_-(f)}) + r(T_F|_{\Gamma_-(f)}).
\end{align*}
\end{proof}

\begin{cor}\label{Cor}\it
Let $F:=F_f$ be a polynomial obtained in Theorem~\ref{MainThm}. 
Then the number $\delta(f)$ of double points of $(f,0)$ coincides with 
$v(T_F|_{\Gamma_-(f)})$. 
\end{cor}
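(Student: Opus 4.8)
The plan is to derive the corollary directly from the main theorem together with the two classical formulas that govern the Milnor number and the delta invariant of a plane curve singularity. The key input is the well-known Milnor--Jung relation
\[
\mu(f) = 2\delta(f) - b(f) + 1,
\]
where $b(f)$ denotes the number of local branches of $(f,0)$. For a convenient and Newton non-degenerate singularity, the number of branches is controlled by the Newton boundary: each coprime pair $(p_i,q_i)$ attached to a compact face of $\Gamma(f)$ contributes $\gcd(p_i,q_i)=1$ branches, so that $b(f)$ equals the total lattice length of $\Gamma(f)$. Since we have arranged in the proof of Theorem~\ref{MainThm} that the slopes $(p_i,q_i)$ are coprime, the lattice length of the $i$-th face is $1$ and hence $b(f)=n$, matching the combinatorial quantity $n$ that already appears throughout that proof.

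First I would rewrite the main theorem's conclusion $\mu(f)=v(T_F|_{\Gamma_-(f)})+r(T_F|_{\Gamma_-(f)})$ using the explicit expressions obtained during its proof. Recall that we computed $v(T_F|_{\Gamma_-(f)})=|\Xi_1|+|\Xi_2|$ and $r(T_F|_{\Gamma_-(f)})=|\Xi_1|+|\Xi_2|-(n-1)$. Substituting these gives
\[
\mu(f) = 2\bigl(|\Xi_1|+|\Xi_2|\bigr) - (n-1)
       = 2\,v(T_F|_{\Gamma_-(f)}) - n + 1.
\]
Comparing this with the Milnor--Jung relation $\mu(f)=2\delta(f)-b(f)+1$ and using $b(f)=n$, the two expressions for $\mu(f)+n-1$ coincide, forcing
\[
\delta(f) = v(T_F|_{\Gamma_-(f)}),
\]
which is exactly the assertion of the corollary.

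The main obstacle, and the step deserving the most care, is justifying that the number of branches $b(f)$ equals $n$ rather than assuming it. I would argue that under Newton non-degeneracy the link of the singularity is an iterated torus link whose branch count is read off from the compact faces of $\Gamma(f)$, and that our normalization of $\Gamma(f)$ into faces with coprime endpoints $(p_i,q_i)$ makes each face a single branch; this identifies $b(f)$ with the number $n$ of such faces. An alternative, perhaps cleaner, route avoids the branch count altogether: one may instead invoke the interpretation of $\delta(f)$ as the number of nodes appearing in a generic deformation, which by the Duality Theorem corresponds precisely to the parallelograms (special squares) of the dual subdivision, and these are in bijection with the $4$-valent vertices counted by $v(T_F|_{\Gamma_-(f)})$. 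Either way, the arithmetic is routine once the branch count is pinned down, so the genuine content lies entirely in the bookkeeping of $n$ as the number of local branches.
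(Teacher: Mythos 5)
Your proposal is correct and follows essentially the same route as the paper: both combine the Milnor--Jung relation $\mu(f)=2\delta(f)-b(f)+1$ with the identities $v=|\Xi_1|+|\Xi_2|$ and $r=|\Xi_1|+|\Xi_2|-(n-1)$ from the proof of Theorem~\ref{MainThm}, and then identify the branch number with $n=\sharp(\mathbb{Z}^2\cap\Gamma(f))-1$ (the paper cites Oka's result for this last fact, where you sketch the lattice-length argument via Newton non-degeneracy).
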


\begin{proof}
In \cite{Oka}, 
we have 
$ r(f) = \sharp (\mathbb{Z}^2 \cap \Gamma(f)) -1 $, 
where $r(f)$ is the number of local irreducible components of $f$ at $0$.
We also have 
\begin{align*}
\mu(f) 
&= 2(| \Xi_1 | + | \Xi_2 | ) - \bigl\{ \sharp (\mathbb{Z}^2 \cap \Gamma(f)) -2 \bigr\} \\
&= 2 v(T_F|_{\Gamma_-(f)}) - \bigl\{ \sharp (\mathbb{Z}^2 \cap \Gamma(f)) -2 \bigr\} 
\end{align*}
from the argument in the proof of Theorem \ref{MainThm}. Thus 
\begin{align*}
2\delta(f)
& =\mu(f) + r(f) -1 \\
& =2 v(T_F|_{\Gamma_-(f)}) - \bigl\{ \sharp (\mathbb{Z}^2 \cap \Gamma(f)) -2 \bigr\} + \sharp (\mathbb{Z}^2 \cap \Gamma(f)) -1 -1 \\
& =2 v(T_F|_{\Gamma_-(f)}).
\end{align*}
\end{proof}

\begin{rem}
As in \cite{AC, GZ}, 
we can obtain the intersection form of vanishing cycles 
from the immersed curve of a real morsification.  
To study the intersection form in our tropical curve  
we need to fix 
``framings" 
on edges of the curve 
in $\Gamma_{-}(f)$, 
though we don't have any good way to see 
these ``framings".
\end{rem}


\begin{thebibliography}{99}

\bibitem{AC}
 N.A'Campo, 
 \textit{Singularities and Related Knots}, Note by W.Gibson and M.Ishikawa, 
 unpublished.

\bibitem{Ganor}
 Y.Ganor, 
 \textit{Enumerating cuspidal curves on toric surfaces}, 
 arXiv:1306.3514v1.

\bibitem{GZ}
 S.M.Gusein-Zade,
 \textit{Intersection matrices for certein singularities of functions of two variables},
 Funct. Anal. Appl. \textbf{8} (1974), 10--13.

\bibitem{IMS}
 I.Itenberg, G.Mikhalkin, E.Shustin, 
 \textit{Tropical Algebraic Geometry}, Second Edition, 
 Oberwolfach Seminars Volume 35, 2009.

\bibitem{Kouch}
 A.G.Kouchnirenko, 
 \textit{Poly\'edres de Newton et nombres de Milnor}, 
 Invent. Math. \textbf{32} (1976), 1--31.

\bibitem{MMS}
 H.Markwig, T.Markwig, E.Shustin,
 \textit{Tropical curves with a singularity in a fixed point},
 Manuscripta Math. \textbf{137} (2012), no.3-4, 383--418.

\bibitem{Mik_enumerative}
 G.Mikhalkin, 
 \textit{Enumerative Tropical Algebraic Geometry in $\mathbb{R}^2$}, 
 J. Amer. Math. Soc. \textbf{18} (2005), 313--377.

\bibitem{Oka}
 M.Oka, 
 \textit{Introduction to Plane Curve Sigularities. Toric Resolution Tower and Puiseux Pairs},   
 Progress in Mathematics \textbf{283} (2009), 209--245.

\bibitem{Shu2}
 E.Shustin,
 \textit{Critical points of real polynomials, subdivisions of Newton polyhedra and topology of real algebraic hypersurfaces}, 
Amer. Math. Soc. Transl. (2), \textbf{173} (1996), 203--223.

\bibitem{Shustin}
 E.Shustin,
 \textit{A tropical approach to enumerative geometry},
 Algebra i Analiz, \textbf{17} Issue 2 (2005), 170--214.

\end{thebibliography}
\end{document}